 %% Based on a TeXnicCenter-Template by Gyorgy SZEIDL.
%%%%%%%%%%%%%%%%%%%%%%%%%%%%%%%%%%%%%%%%%%%%%%%%%%%%%%%%%%%%%

%------------------------------------------------------------
%
\documentclass[a4paper,11pt]{article}%

\pdfoutput=1 
%Options -- Point size:  10pt (default), 11pt, 12pt
%        -- Paper size:  letterpaper (default), a4paper, a5paper, b5paper
%                        legalpaper, executivepaper
%        -- Orientation  (portrait is the default)
%                        landscape
%        -- Print size:  oneside (default), twoside
%        -- Quality      final(default), draft
%        -- Title page   notitlepage, titlepage(default)
%        -- Columns      onecolumn(default), twocolumn
%        -- Equation numbering (equation numbers on the right is the default)
%                        leqno
%        -- Displayed equations (centered is the default)
%                        fleqn (equations start at the same distance from the right side)
%        -- Open bibliography style (closed is the default)
%                        openbib
% For instance the command
%           \documentclass[a4paper,12pt,leqno]{article}
% ensures that the paper size is a4, the fonts are typeset at the size 12p
% and the equation numbers are on the left side
%
%\addtolength{\hoffset}{-1cm}
%\addtolength{\textwidth}{2cm}

\usepackage{amsmath}%
\usepackage{amsfonts}%
\usepackage{amssymb}%
\usepackage{amsthm}
\usepackage{stmaryrd}
\usepackage{graphicx}
\usepackage[utf8]{inputenc}
\usepackage[T1]{fontenc}
\usepackage[french, english]{babel}
\usepackage{enumitem}
\usepackage{tikz}
\usetikzlibrary{decorations.markings}
\usepackage[colorlinks=true]{hyperref}

%-------------------------------------------

\newtheorem{theorem}{Theorem}

\newtheorem{claim}[theorem]{Claim}

\newtheorem{lemma}[theorem]{Lemma}

\theoremstyle{definition}

\theoremstyle{remark}

\newtheorem{remark}[theorem]{Remark}

\newlength{\espaceavantspecialthm}
\newlength{\espaceapresspecialthm}
\setlength{\espaceavantspecialthm}{\topsep}
\setlength{\espaceapresspecialthm}{\topsep}

\newcommand{\R}{\mathbb{R}}

{\\}

\newenvironment{defi*}[1][]{
\vskip \espaceavantspecialthm \noindent \textbf{D\'efinition.} }%
{\vskip \espaceapresspecialthm}

\addtolength{\voffset}{-1cm}
\addtolength{\textheight}{2cm}
\addtolength{\hoffset}{-1cm}
\addtolength{\textwidth}{2cm} 
\addtolength{\marginparwidth}{.7cm}

\counterwithin{equation}{section}

%%%%%%% Commande \hgline pour dessiner des géodésiques dans le modèle du disque hyperbolique

\tikzset{->-/.style={decoration={
  markings,
  mark=at position .5 with {\arrow{latex}}},postaction={decorate}}}

\newcommand\test[1]{
\pgfmathsetmacro{\var}{#1}
\pgfmathparse{ifthenelse(\var>=0,"positif","négatif")} \pgfmathresult}%

\newcommand{\hgline}[3]{
\pgfmathsetmacro{\thetaone}{#1}
\pgfmathsetmacro{\thetatwo}{#2}
\pgfmathsetmacro{\theta}{(\thetaone+\thetatwo)/2}
\pgfmathsetmacro{\phi}{abs(\thetaone-\thetatwo)/2}
\pgfmathsetmacro{\close}{less(abs(\phi-90),0.0001)}
\ifdim \close pt = 1pt
    \draw[->-, color=#3] (\thetaone:1) -- (\thetatwo:1);
\else
	\pgfmathsetmacro{\R}{tan(\phi)}
	\pgfmathsetmacro{\test}{(\thetaone-\thetatwo)/abs(\thetaone-\thetatwo)}
	\draw[->-, color=#3] (\thetaone:1) arc (\thetaone+\test*90:\thetaone+\test*(270-2*\phi):\R);
\fi
}

\newcommand{\hglinefill}[3]{
\pgfmathsetmacro{\thetaone}{#1}
\pgfmathsetmacro{\thetatwo}{#2}
\pgfmathsetmacro{\theta}{(\thetaone+\thetatwo)/2}
\pgfmathsetmacro{\phi}{abs(\thetaone-\thetatwo)/2}
\pgfmathsetmacro{\close}{less(abs(\phi-90),0.0001)}
\ifdim \close pt = 1pt
    \filldraw[->-, color=#3] (\thetaone:1) -- (\thetatwo:1) ;
\else
	\pgfmathsetmacro{\R}{tan(\phi)}
	\pgfmathsetmacro{\test}{(\thetaone-\thetatwo)/abs(\thetaone-\thetatwo)}
	\filldraw[color=#3, opacity=.2] (\thetaone:1) arc (\thetaone+\test*90:\thetaone+\test*(270-2*\phi):\R) arc (\thetaone+\test*(270-2*\phi)-90:\thetaone+\test*90+90:1);
\fi
}

\begin{document}

\sloppy

\title{Generalized foliations for homeomorphisms isotopic to a pseudo-Anosov homeomorphism: a geometric realization of a result by Fathi}
\author{Emmanuel Militon}
\maketitle

\begin{abstract}
We give a new proof of a result by Fathi, which states that, to any homeomorphism of a closed surface which is isotopic to a pseudo-Anosov homeomorphism, we can associate a stable and an unstable invariant partition of the surface with properties which are similar to the unstable and the stable foliation of a pseudo-Anosov homeomorphism. 
\end{abstract}

\selectlanguage{english}

\section{Introduction}

Pseudo-Anosov homeomorphisms of surfaces are a generalization of Anosov homeomorphisms to higher-genus surfaces which play an important role in the Nielsen-Thurston classification of isotopy classes of homeomorphisms of surfaces. Indeed, by this classification, any homeomorphism of a surface which does not preserve the isotopy class of any finite collection of disjoint essential simple closed curves has to be isotopic to a pseudo-Anosov homeomorphism. 

The dynamics of a homeomorphism which is isotopic to a pseudo-Anosov homeomorphism is rather well-understood. Indeed, Handel proved that any such homeomorphism possesses an invariant closed subset on which it is semiconjugate to its pseudo-Anosov representative (see \cite{MR0805836}). In this note, we are interested in a more precise result by Fathi (see Theorem 2.6 in \cite{MR1062759}), which states that such homeomorphisms have also a stable and an unstable partition which are similar to that of its pseudo-Anosov representative. However, Fathi's approach is rather abstract and does not give an explicit construction of those partitions. In this note, we propose an explicit construction of those stable and unstable partitions. We obtain this partition by shadowing the orbits of the pseudo-Anosov homeomorphism only in the future or only in the past. In the last section, we provide an extension of this theorem in the case of a surface with marked points. This extension allows to recover some of the results of a recent preprint by Alejo García-Sassi and Fábio Armando Tal \cite{GT}.

I would like to thank Jonathan Bowden, Alejo García-Sassi, Pierre-Antoine Guihéneuf, Sebastian Hensel and Frédéric Le Roux for helpful discussions about this note and Pierre-Antoine Guihéneuf for his comments on a first version of this note. Finally, I am very thankful to Nastaran Einabadi, Federica Fanoni, Sebatian Hensel, Frédéric Le Roux, Nelson Schuback for finding an important flaw and especially to Frédéric Le Roux for indicating many smaller mistakes in a previous version of the article.

\section{Notation}

Let $S$ be a closed surface with genus $\geq 2$. 
Let $A$ be a pseudo-Anosov homeomorphism of $S$ and denote by $\tilde{A}$ one of its lift to the universal cover $\tilde{S}$ of $S$. We refer to \cite{MR2850125} and \cite{MR0568308} for a precise definition of pseudo-Anosov homeomorphisms and more context. We identify $\tilde{S}$ with the hyperbolic plane $\mathbb{H}^2$. By definition, the homeomorphism $A$ preserves two transverse singular foliations, the stable one $\mathcal{F}^s$ and the unstable one $\mathcal{F}^u$. Let us denote $\tilde{\mathcal{F}}^s$ and $\tilde{\mathcal{F}}^u$ the lifts of those foliations to $\tilde{S}$. For $i=s,u$, we call leaf of $\tilde{\mathcal{F}}^i$ either a regular leaf of $\tilde{\mathcal{F}}^i$ or the union of a singularity with all the branches of $\tilde{\mathcal{F}}^i$ which converge to this singularity.

The foliations $\tilde{\mathcal{F}}^s$ and $\tilde{\mathcal{F}}^u$ are endowed with respective transverse measures $\mu^u$ and $\mu^s$ with the following property. There exists $0<\lambda <1$ such that, for any arc $\tilde{\alpha} : [0,1] \rightarrow \tilde{S}$ and any $n \geq 0$,
$$ \left\{
\begin{array}{rcl}
\mu^u(\tilde{A}^{-n}(\tilde{\alpha})) & = & \lambda^n \mu^u(\tilde{\alpha}) \\
\mu^s(\tilde{A}^{n}(\tilde{\alpha})) & = & \lambda^n \mu^s(\tilde{\alpha}).
\end{array} \right.$$
Moreover, the transverse measures $\mu^u$ and $\mu^s$ are invariant under the action of the group $\pi_1(S)$ of deck transformations of $\tilde{S}$.

We can then define pseudo-distances $d_u$ and $d_s$ on $\tilde{S}$ in the following way. For $i=s,u$, and $\tilde{x},\tilde{y} \in \tilde{S}$, $d_i(\tilde{x},\tilde{y})$ is the infimum of $\mu^i(\tilde{\alpha})$ over all arcs $\tilde{\alpha}:[0,1] \rightarrow \tilde{S}$ with $\tilde{\alpha}(0)=\tilde{x}$ and $\tilde{\alpha}(1)=\tilde{y}$. Observe that, for $\tilde{x},\tilde{y} \in \tilde{S}$, $d_{u}(\tilde{x},\tilde{y})=0$ if and only if $\tilde{x}$ and $\tilde{y}$ belong to the same leaf of $\tilde{\mathcal{F}}^s$ and a similar result holds for $d_s$. The deck transformations of $\tilde{S}$ are isometries for both pseudo-distances $d_u$ and $d_s$. Finally recall that $d=d_s+d_u$ defines a distance on $\tilde{S}$ which is compatible with the topology of $\tilde{S}$. Finally, due to the properties of $\mu^u$ and $\mu^s$, for any two points $\tilde{x}$ and $\tilde{y}$ of $\tilde{S}$,
$$d_u(\tilde{A}^{-1}(\tilde{x}),\tilde{A}^{-1}(\tilde{y}))=\lambda d_u(\tilde{x},\tilde{y})$$
and
$$d_s(\tilde{A}(\tilde{x}),\tilde{A}(\tilde{y}))=\lambda d_s(\tilde{x},\tilde{y}).$$

In what follows, we will need to compactify the universal cover $\tilde{S}$ with a circle at infinity which we denote by $\partial \tilde{S}$. When we see $\tilde{S}=\mathbb{H}^2$ as the Poincar\'e disk, which is the unit open disk, the circle at infinity is just the unit circle. We set $\overline{S}=\tilde{S} \cup\partial \tilde{S} $.

This compactification satisfies the following properties that we will use.
\begin{enumerate}
\item Any end of a leaf in $\tilde{\mathcal{F}}^s$ or in $\tilde{\mathcal{F}}^u$ converges to a point of $\partial \tilde{S}$.
\item For $i=s,u$, no point of $\partial \tilde{S}$ is the end of two different leaves of $\tilde{\mathcal{F}}^i$.
\item Any homeomorphism $\tilde{g}$ of $\tilde{S}$ which is a lift of a homeomorphism $g$ of $S$ extends continuously to $\overline{S}$. By abuse of notation, we will still denote by $\tilde{g}$ this continuous extension.
\item Given two isotopic homeomorphisms $f$ and $g$ of $S$, their isotopic lifts $\tilde{f}$ and $\tilde{g}$ are equal on the boundary at infinity $\partial \tilde{S}$. 

\end{enumerate}

For any subset $A$ of $\tilde{S}$, we denote by $\overline{A}$ its closure in $\overline{S}$. In the case where $A=L$ is a regular leaf, it consists of $L$ with two extra points on $\partial \tilde{S}$. In the case of a leaf $L$ which contains a $k$-pronged singularity, it consists of $L$ with $k$ extra point on $\partial \tilde{S}$.

For $C>0$, and $L \in \tilde{\mathcal{F}}^s$, we define
$$L_C= \left\{ \tilde{x} \in \tilde{S} \ | \ d_u(\tilde{x},L) \leq C \right\}.$$
We define similarly $L_C$ for $L \in \tilde{\mathcal{F}}^u$.

The distance $d_u$ induces a distance on the space $\mathcal{L}^s$ of leaves of the foliation $\tilde{\mathcal{F}}^s$. However, this metric space is not necessarily complete. We will denote by $\overline{\mathcal{L}}^s$ the completion of $(\mathcal{L}^s,d_u)$. We will see this completion as the set of equivalence classes of Cauchy sequences $(L_n)_n$ of elements of  $\mathcal{L}^s$ for the equivalence relation defined by $(L_n)_n \sim (L'_n)_n$ if and only if
$$ \lim_{n \rightarrow +\infty} d_{u}(L_n,L'_n)=0.$$
Constant sequences define an embedding $\mathcal{L}^s \subset \overline{\mathcal{L}^s}$ and the distance $d_u$ on $\mathcal{L}^s$ extends to a distance $d_u$ on $\overline{\mathcal{L}^s}$. For any Cauchy sequence $(L_n)$ of leaves of $\mathcal{L}^s$, the sequence $(\overline{L}_n)$ of  compact subsets of $\overline{S}$ converges for the Hausdorff topology to a connected compact subset of $\overline{S}$. When this sequence $(L_n)$ is equivalent to a constant sequence, then the sequence of subset is converging to the closure of the leaf of the constant sequence. Otherwise, this sequence will converge to one point on the boundary at infinity. This limit point does not change if we change the sequence $(L_n)$ to an equivalent one. Hence, to any point $\xi$ $\overline{\mathcal{L}}^s$, we associate a subset $\overline{L}(\xi)$ which is either the closure of a stable leaf or one point on $\partial \tilde{S}$

Similarly, we define $\overline{\mathcal{L}}^u$ as the completion of $(\mathcal{L}^u,d_s)$, where $\mathcal{L}^u$ is the space of leaves of the foliation $\tilde{\mathcal{F}}^u$.

Observe that the lift $\tilde{A}$ of the pseudo-Anosov homeomorphism as well as any deck transformation in $\pi_1(S)$ act on the complete spaces $\overline{\mathcal{L}}^s$ and $\overline{\mathcal{L}}^u$ by homeomorphisms.

\section{The generalized foliation}

Let $f$ be a homeomorphism which is isotopic to $A$. Take an isotopy $(f_t)$ with $f_1=f$ and $f_0=A$ which pointwise fixes $F$ and lift it to an isotopy $(\tilde{f}_t)_{t \in [0,1]}$ of $\tilde{S}$ with $\tilde{f}_0=\tilde{A}$. Then $\tilde{f}=\tilde{f}_1:\tilde{S} \rightarrow \tilde{S}$ is a lift of $f$, which is equal to $\tilde{A}$ on the boundary at infinity $\partial \tilde{S}$.

\begin{theorem} \label{Th:generalized}
Fix $i=s,u$. To each point $\xi$ of $\overline{\mathcal{L}}^i$, we associate a compact connected subset $\overline{\eta}^i(\xi)$ of $\overline{S}$ with the following properties.
\begin{enumerate}
\item The $\eta^{i}(\xi)=\overline{\eta}^i(\xi) \cap \tilde{S}$, for $\xi \in \overline{\mathcal{L}}^i$, are pairwise disjoint and cover $\tilde{S}$. Moreover, if $L \in \mathcal{L}^i$, then $\eta^i(L) \neq \emptyset$.
\item For every $L \in \mathcal{L}^i$, the set $\overline{S} \setminus \overline{\eta}^{i}(L)$ has the same number of connected components as $\overline{S} \setminus \overline{L}$. For every $\xi \in \overline{\mathcal{L}}^i \setminus \mathcal{L}^i$, the set $\overline{S} \setminus \overline{\eta}^{i}(\xi)$ has only one connected component.
\item For every $\xi \in \overline{\mathcal{L}}^i$, 
$$ \tilde{f} \left( \overline{\eta}^i(\xi) \right)=\overline{\eta}^i \left(\tilde{A}(\xi) \right).$$
\item For every $\xi \in \overline{\mathcal{L}}^i$ and any deck transformation $\gamma \in \pi_1(S)$,
$$ \gamma \left( \overline{\eta}^i(\xi) \right)=\overline{\eta}^i \left( \gamma(\xi) \right).$$
\item For every $\xi \in \overline{\mathcal{L}}^i$, $\overline{\eta}^i(\xi) \cap \partial \tilde{S}= \overline{L}(\xi) \cap \partial \tilde{S}$.
\item The sets $\eta^s(\xi)$ are the stable sets of all its points, namely for every $\xi \in \overline{\mathcal{L}}^{s}$ and $\tilde{x} \in \eta^{s}(\xi)$,
$$ \eta^{s}(\xi)= \left\{ \tilde{y} \in \tilde{S} \ | \ \exists C'>0, \ \forall n \geq 0, d(\tilde{f}^{n}(\tilde{x}),\tilde{f}^{n}(\tilde{y})) \leq C' \right\}.$$
The sets $\eta^u(\xi)$ are the unstable sets of all its points, namely for every $\xi \in \overline{\mathcal{L}}^{u}$ and $\tilde{x} \in \eta^{u}(\xi)$,
$$ \eta^{u}(L)= \left\{ \tilde{y} \in \tilde{S} \ | \ \exists C'>0, \ \forall n \geq 0, d(\tilde{f}^{-n}(\tilde{x}),\tilde{f}^{-n}(\tilde{y})) \leq C' \right\}.$$
\item For every point $\xi$ of $\overline{\mathcal{L}^i}$ such that $\eta^{i}(\xi) \neq \emptyset$ and every neighbourhood $U$ of $\overline{\eta}^i(\xi)$ in $\overline{S}$, there exists $\eta>0$ such that, for every point $\xi'$ of $\overline{\mathcal{L}}^i$ such that $d_u(\xi,\xi') \leq \eta$, we have $\overline{\eta}^i(\xi') \subset U$.
\end{enumerate}
\end{theorem}

Fix $i=s,u$. By Theorem \ref{Th:generalized}, the sets $\eta^{i}(\xi)$, with $\xi \in \overline{\mathcal{L}}^i$, form a $\tilde{f}$-invariant partition of $\tilde{S}$ which projects to a $f$-invariant partition of $S$. 

\begin{remark}
Point 6. of Theorem \ref{Th:generalized} proves that the partitions given by Theorem \ref{Th:generalized} are unique.
\end{remark}

\begin{remark} To each point $\tilde{x}$ of $\tilde{S}$, one can associate a unique point $\xi_s \in \overline{\mathcal{L}}^s$ and a unique point $\xi_u \in \overline{\mathcal{L}}^u$ such that $\tilde{x} \in \eta^{s}(\xi_s) \cap \eta^u(\xi_u)$. Hence the above theorem defines a map $\tilde{S} \rightarrow \overline{\mathcal{L}}^s \times \overline{\mathcal{L}}^u$. We prove now that this map is continuous.

\begin{proof}
By point 7. in Theorem \ref{Th:generalized}, the map, which to any pair of points $(\xi_{s},\xi_u) \in \overline{\mathcal{L}}^s \times \overline{\mathcal{L}}^u$ associates the subsets $(\overline{\eta}^{s}(\xi_s), \overline{\eta}^{u}(\xi_u))$ is continuous for a suitable topology on the spaces of partitions given by $\eta^{s}$ and $\eta^u$. Moreover, we can endow the partitions given by $\overline{\eta}^s$ and $\overline{\eta}^u$ with distances $d'_s$ and $d'_u$ which are compatible with the topology so that the maps $\xi_s \mapsto \overline{\eta}^{s}(\xi_s)$ and $\xi_u \mapsto \overline{\eta}^{u}(\xi_u)$ are isometries. Hence the map 
$$(\xi_s,\xi_u) \mapsto (\overline{\eta}^{s}(\xi_s), \overline{\eta}^{u}(\xi_u))$$
is a homeomorphism. Now the map which, to a point $\tilde{x}$ of $\tilde{S}$ associates the unique pair $(\overline{\eta}^{s}(\xi_s), \overline{\eta}^{u}(\xi_{u}))$ such that $\tilde{x} \in \overline{\eta}^{s}(\xi_s) \cap \overline{\eta}^{u}(\xi_u)$ is continuous, which proves the continuity of the map $\tilde{S} \rightarrow \overline{\mathcal{L}}^s \times \overline{\mathcal{L}}^u$. 
\end{proof}
This map semi-conjugates the homeomorphism $\tilde{f}$ to the restriction of the action of $\tilde{A}$ on $\overline{\mathcal{L}}^s \times \overline{\mathcal{L}}^u$ to an invariant subset. This makes the connection with Fathi's result (see Theorem 2.6 in \cite{MR1062759}).

\end{remark}

\begin{proof}[Proof of Theorem \ref{Th:generalized}]
We will prove this theorem in the case $i=s$. The case $i=u$ is made by exchanging the roles of $\tilde{f}$ and $\tilde{f}^{-1}$, of $\tilde{A}$ with $\tilde{A}^{-1}$, of $u$ and $s$ in the proof below.

The starting point of the proof is the following claim.

\begin{claim} \label{uniformbound}
There exists $C_0>0$ such that, for any $\tilde{x} \in \tilde{S}$,
$$\left\{ \begin{array}{rcl}
d(\tilde{f}(\tilde{x}),\tilde{A}(\tilde{x})) & \leq & C_0 \\
d(\tilde{f}^{-1}(\tilde{x}),\tilde{A}^{-1}(\tilde{x})) & \leq & C_0
\end{array}
\right.$$
\end{claim}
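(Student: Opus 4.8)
The plan is to derive this from the fact that $f$ and $A$ are isotopic via an isotopy that lifts to $(\tilde f_t)$ with $\tilde f_0 = \tilde A$ and $\tilde f_1 = \tilde f$. First I would set $\tilde g = \tilde A^{-1} \circ \tilde f$. This is a lift of the homeomorphism $g = A^{-1} \circ f$ of $S$, and since $f$ is isotopic to $A$ (through an isotopy fixing $F$ pointwise), $g$ is isotopic to the identity; moreover the chosen lift $\tilde g$ is the one coming from the lifted isotopy, hence $\tilde g$ is isotopic to $\mathrm{Id}_{\tilde S}$ through lifts and in particular $\tilde g$ agrees with the identity on $\partial\tilde S$. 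The key point is that a lift of a homeomorphism of a compact surface which is isotopic to the identity moves points a bounded amount: the quantity $\sup_{\tilde x \in \tilde S} d(\tilde g(\tilde x), \tilde x)$ is finite. I would prove this by a compactness argument: the function $\tilde x \mapsto d(\tilde g(\tilde x),\tilde x)$ is continuous on $\tilde S$, and it is invariant under the deck group $\pi_1(S)$ because $\tilde g$ commutes with deck transformations (a lift of a homeomorphism does) and the deck transformations are isometries for $d = d^s + d^u$; hence this function descends to a continuous function on the compact surface $S$, so it is bounded, say by some constant $C_1 > 0$.

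Next I would transfer this bound through $\tilde A$. We have $\tilde f(\tilde x) = \tilde A(\tilde g(\tilde x))$, so
$$ d\bigl(\tilde f(\tilde x), \tilde A(\tilde x)\bigr) = d\bigl(\tilde A(\tilde g(\tilde x)), \tilde A(\tilde x)\bigr).$$
Now $\tilde A$ is not an isometry for $d$, but it is Lipschitz in a suitable sense: from the contraction inequalities $\mu^u(\tilde A^{-n}\tilde\alpha) \le \lambda^n \mu^u(\tilde\alpha)$ and $\mu^s(\tilde A^n \tilde\alpha)\le \lambda^n\mu^s(\tilde\alpha)$ applied with $n=1$ one reads off, by the definition of $d^u$ and $d^s$ as infima of transverse measures of connecting arcs, that $d^u(\tilde A\tilde x, \tilde A\tilde y) \le \lambda^{-1} d^u(\tilde x,\tilde y)$ and $d^s(\tilde A \tilde x,\tilde A\tilde y)\le \lambda\, d^s(\tilde x,\tilde y) \le \lambda^{-1} d^s(\tilde x,\tilde y)$ (the unstable measure $\mu^u$ is expanded by $\tilde A$ by at most $\lambda^{-1}$ since it is contracted by $\tilde A^{-1}$ by at most $\lambda$; likewise for $\mu^s$ with $\tilde A$ directly). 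Hence $d(\tilde A\tilde x,\tilde A\tilde y)\le \lambda^{-1} d(\tilde x,\tilde y)$, so
$$ d\bigl(\tilde f(\tilde x),\tilde A(\tilde x)\bigr) \le \lambda^{-1} d\bigl(\tilde g(\tilde x),\tilde x\bigr) \le \lambda^{-1} C_1.$$
For the second inequality I would argue symmetrically: write $\tilde f^{-1}(\tilde x) = \tilde g^{-1}(\tilde A^{-1}(\tilde x))$, note that $\tilde g^{-1}$ is also a lift of a homeomorphism isotopic to the identity so $d(\tilde g^{-1}(\tilde y),\tilde y)$ is bounded by some $C_2$ (the same compactness argument), and use that $\tilde A^{-1}$ expands $d^s$ and $d^u$ each by at most $\lambda^{-1}$ for the same reason, giving $d(\tilde f^{-1}(\tilde x),\tilde A^{-1}(\tilde x)) = d(\tilde g^{-1}(\tilde A^{-1}\tilde x), \tilde A^{-1}\tilde x) \le C_2$. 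Taking $C_0 = \max(\lambda^{-1}C_1, C_2)$ finishes the proof.

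I expect the main obstacle to be the first step — verifying cleanly that the displacement $\sup d(\tilde g(\tilde x),\tilde x)$ is finite. The subtlety is that this genuinely uses that $\tilde g$ is the lift coming from an isotopy to the identity (an arbitrary lift of $g$ would differ by a deck transformation and could have unbounded displacement), and one must check that $\tilde g$ commutes with deck transformations so that the displacement function is $\pi_1(S)$-periodic and descends to the compact quotient. A secondary, purely bookkeeping point is that $d$ is only a pseudo-distance-sum and $\tilde A$ is not a $d$-isometry, so one must be a little careful extracting the Lipschitz constant $\lambda^{-1}$ from the transverse-measure inequalities rather than blithely asserting $\tilde A$ is bi-Lipschitz; but this is immediate once one recalls that $d^u, d^s$ are defined as infima over connecting arcs of $\mu^u, \mu^s$.
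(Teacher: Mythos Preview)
Your argument is correct in outline and rests on the same idea as the paper's proof (the displacement-type function is $\pi_1(S)$-periodic, hence bounded by compactness), but you take an unnecessary detour for the first inequality and introduce a small logical slip along the way.

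The paper applies the compactness argument directly to the function $\tilde{x}\mapsto d(\tilde{f}(\tilde{x}),\tilde{A}(\tilde{x}))$. This function is itself $\pi_1(S)$-invariant, because $\tilde{f}$ and $\tilde{A}$, being isotopic lifts, satisfy $\tilde{f}\circ\gamma=\xi(\gamma)\circ\tilde{f}$ and $\tilde{A}\circ\gamma=\xi(\gamma)\circ\tilde{A}$ for the \emph{same} automorphism $\xi(\gamma)$; since $d$ is deck-invariant, the function descends to $S$. No Lipschitz estimate on $\tilde{A}$ is needed. You instead bound the displacement of $\tilde{g}=\tilde{A}^{-1}\tilde{f}$ and then push through $\tilde{A}$, which forces you to control $d(\tilde{A}\cdot,\tilde{A}\cdot)$.

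The slip is in your justification of $d^u(\tilde{A}\tilde{x},\tilde{A}\tilde{y})\le\lambda^{-1}d^u(\tilde{x},\tilde{y})$: from the paper's inequality $\mu^u(\tilde{A}^{-1}\tilde{\alpha})\le\lambda\,\mu^u(\tilde{\alpha})$ one deduces $\mu^u(\tilde{A}\tilde{\beta})\ge\lambda^{-1}\mu^u(\tilde{\beta})$, i.e.\ expansion by \emph{at least} $\lambda^{-1}$, not at most. The upper bound you want is true for a pseudo-Anosov (the transverse measures scale exactly), but it does not follow from the one-sided inequalities as stated. Incidentally, your own treatment of the second inequality already shows how to avoid this: writing $\tilde{h}=\tilde{f}\circ\tilde{A}^{-1}$ gives $d(\tilde{f}(\tilde{x}),\tilde{A}(\tilde{x}))=d(\tilde{h}(\tilde{A}\tilde{x}),\tilde{A}\tilde{x})$, which is just the displacement of $\tilde{h}$ and is bounded by the same compactness argument, with no Lipschitz step.
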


\begin{proof}

Let $D \subset \tilde{S}$ be a compact subset with $\pi(D)=S$, where $\pi:\tilde{S} \rightarrow S$ is the projection. By compactness
$$C'_0= \max_{\tilde{x} \in D} d(\tilde{f}(\tilde{x}),\tilde{A}(\tilde{x}))$$
exists. However, by construction of $\tilde{f}$, for any deck transformation $\gamma \in \pi_1(S)$, there exists a deck transformation $\xi(\gamma)$ such that
$$\left\{ \begin{array}{rcl}
 \tilde{f} \circ \gamma & = & \xi(\gamma) \circ \tilde{f} \\
   \tilde{A} \circ \gamma & = & \xi(\gamma) \circ \tilde{A}.
\end{array} \right.$$
Indeed, denote by $\xi(\gamma)$ the deck transformation such that $\tilde{A} \circ \gamma = \xi(\gamma) \circ \tilde{A}$. Then the isotopies $(\tilde{f}_t \circ \gamma)_t$ and $(\xi(\gamma) \circ \tilde{f}_t)_t$ both lift the isotopy $(f_t)_t$ and are equal for $t=0$. Hence, for any $t$ , $\tilde{f}_t \circ \gamma=\xi(\gamma) \circ \tilde{f}_t$ and, in particular, $\tilde{f} \circ \gamma=\xi(\gamma) \circ \tilde{f}$.

As $d$ is $\pi_1(S)$-invariant and as
$$\tilde{S}= \bigcup_{\gamma \in \pi_1(S)} \gamma(D),$$
we obtain that $C'_0= \max_{\tilde{x} \in \tilde{S}} d(\tilde{f}(\tilde{x}),\tilde{A}(\tilde{x})).$
In the same way, we can prove that $C''_0= \max_{\tilde{x} \in \tilde{S}} d(\tilde{f}^{-1}(\tilde{x}),\tilde{A}^{-1}(\tilde{x}))$ exists. It suffices then to take $C_0=\max(C'_0,C''_0)$. 
\end{proof}

Observe that $L_C$ is a union of leaves of $\tilde{\mathcal{F}}^s$ and that the closure $\overline{L}_C$ of $L_C$ in $\overline{S}$ is a compact connected subset of $\overline{S}$.

\begin{lemma} \label{Lem:bigenoughnbhd}
For any $C> \frac{C_0}{1-\lambda}$, there exists $\frac{C_0}{1-\lambda}<C'<C$ such that, for any leaf $L \in \tilde{\mathcal{F}}^s$, the following inclusion holds.
$$ \tilde{f}^{-1} \left( \tilde{A}(L)_{C} \right) \subset L_{C'}.$$
\end{lemma}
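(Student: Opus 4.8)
The plan is to reduce the inclusion to a single contraction property — that $\tilde{A}^{-1}$ shrinks the transverse pseudo-distance $d^u$ by the factor $\lambda$ — and then combine it with the Claim through the triangle inequality. Concretely, I would take a point $\tilde{y}\in\tilde{A}(L)_C$, i.e.\ with $d^u(\tilde{y},\tilde{A}(L))\le C$, set $\tilde{x}=\tilde{f}^{-1}(\tilde{y})$, and try to show $d^u(\tilde{x},L)\le C$. The natural intermediate point is $\tilde{A}^{-1}(\tilde{y})$, so I would start from
$$d^u(\tilde{x},L)\;\le\;d^u\bigl(\tilde{x},\tilde{A}^{-1}(\tilde{y})\bigr)+d^u\bigl(\tilde{A}^{-1}(\tilde{y}),L\bigr),$$
which is legitimate since the triangle inequality holds for the pseudo-distance $d^u$ and since the distance to a leaf is the infimum of $d^u(\,\cdot\,,\tilde w)$ over the points $\tilde w$ of that leaf.

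The first preliminary step is the contraction estimate itself: for all $\tilde p,\tilde q\in\tilde S$, $d^u(\tilde{A}^{-1}(\tilde p),\tilde{A}^{-1}(\tilde q))\le\lambda\,d^u(\tilde p,\tilde q)$. This comes directly from the transverse-measure inequality in the case $n=1$, namely $\mu^u(\tilde{A}^{-1}(\tilde\alpha))\le\lambda\,\mu^u(\tilde\alpha)$: given a near-optimal arc $\tilde\alpha$ from $\tilde p$ to $\tilde q$, the arc $\tilde{A}^{-1}\circ\tilde\alpha$ joins $\tilde{A}^{-1}(\tilde p)$ to $\tilde{A}^{-1}(\tilde q)$ and has $\mu^u$-length at most $\lambda$ times that of $\tilde\alpha$; letting the excess tend to $0$ gives the inequality. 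Taking infima over the second variable along a leaf, and using that $\tilde{A}$ preserves $\tilde{\mathcal F}^s$ (so $\tilde{A}(L)\in\tilde{\mathcal F}^s$ and $L=\tilde{A}^{-1}(\tilde{A}(L))$), the same bound holds for distances to leaves: $d^u(\tilde{A}^{-1}(\tilde p),\tilde{A}^{-1}(L'))\le\lambda\,d^u(\tilde p,L')$ for any $L'\in\tilde{\mathcal F}^s$.

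Now I would bound the two terms. For the first, $\tilde{f}(\tilde{x})=\tilde{y}$, so the Claim gives $d^u(\tilde{y},\tilde{A}(\tilde{x}))\le d(\tilde{f}(\tilde{x}),\tilde{A}(\tilde{x}))\le C_0$ (using $d^u\le d$), and applying the contraction estimate to $\tilde{A}(\tilde{x})$ and $\tilde{y}$ yields $d^u(\tilde{x},\tilde{A}^{-1}(\tilde{y}))\le\lambda C_0$. For the second, writing $L=\tilde{A}^{-1}(\tilde{A}(L))$ and using the leafwise contraction gives $d^u(\tilde{A}^{-1}(\tilde{y}),L)\le\lambda\,d^u(\tilde{y},\tilde{A}(L))\le\lambda C$, since $\tilde{y}\in\tilde{A}(L)_C$. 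Summing, $d^u(\tilde{x},L)\le\lambda(C_0+C)$, which is $\le C$ precisely when $\lambda C_0\le(1-\lambda)C$, i.e.\ when $C\ge\frac{\lambda C_0}{1-\lambda}$; since $\lambda<1$ this is implied by the hypothesis $C\ge\frac{C_0}{1-\lambda}$. Hence $\tilde{x}\in L_C$, which is the asserted inclusion.

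I do not expect a genuine obstacle. The only points that need a little care are that $d^u$ is merely a pseudo-distance — the triangle inequality and the infimum description of the distance to a leaf remain valid nonetheless — and the passage from the contraction estimate on pairs of points to the estimate on distances to a leaf, which is a routine infimum manipulation once one notes that $\tilde{A}$ carries leaves of $\tilde{\mathcal F}^s$ to leaves of $\tilde{\mathcal F}^s$.
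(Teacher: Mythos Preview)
Your proof is correct and follows essentially the same route as the paper: pick a point in $\tilde{A}(L)_C$, compare its $\tilde{f}^{-1}$-image with its $\tilde{A}^{-1}$-image via the Claim, use the $\lambda$-contraction of $d^u$ under $\tilde{A}^{-1}$, and conclude with the triangle inequality. The only cosmetic differences are that the paper swaps the names of $\tilde{x}$ and $\tilde{y}$, and invokes the Claim in its backward form $d(\tilde{f}^{-1}(\tilde x),\tilde{A}^{-1}(\tilde x))\le C_0$ rather than the forward one; as a consequence the paper obtains the cruder bound $C_0+\lambda C$ instead of your $\lambda(C_0+C)$, so your argument actually yields the lemma under the weaker hypothesis $C\ge\frac{\lambda C_0}{1-\lambda}$.
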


\begin{proof}
Fix a leaf $L \in \tilde{\mathcal{F}}^s$ and take $\tilde{x} \in \tilde{A}(L)_C$. Then there exists $\tilde{y} \in \tilde{A}(L)$ such that $d_u(\tilde{x},\tilde{y}) \leq C$. Then
$$\begin{array}{rcl}
d_{u}(\tilde{f}^{-1}(\tilde{x}),\tilde{A}^{-1}(\tilde{y})) & \leq  & C_0 + d_u(\tilde{A}^{-1}(\tilde{x}),\tilde{A}^{-1}(\tilde{y})) \\
 & \leq & C_0 +\lambda d_{u}(\tilde{x},\tilde{y}) \\
 & \leq & C_0+\lambda C.
 \end{array}
 $$ 
By taking $C > \frac{C_0}{1-\lambda}$, we obtain that $C_0+ \lambda C <C$. We take then $C'$ such that $C_0+ \lambda C< C'<C$. With those choices, for any leaf $L \in \mathcal{L}^s$ and for any point $\tilde{x} \in \tilde{A}(L)_C$, the point $\tilde{f}^{-1}(\tilde{x})$ belongs to $L_{C'}$.
\end{proof}

For the rest of the proof, we fix $C>C'>\frac{C_0}{1-\lambda}$ given by Lemma \ref{Lem:bigenoughnbhd}. Fix a Cauchy sequence $(L_n)_n$ of elements of $\mathcal{L}^s$ such that, for any $k \geq n$, $d_u(L_n,L_{k}) \leq \lambda^{n} (C-C')$ so that $d_u(\tilde{A}^{n}(L_n),\tilde{A}^{n}(L_{k})) \leq C-C'$. We say that such a Cauchy sequence is \emph{adapted}.

By Lemma \ref{Lem:bigenoughnbhd}, for any $n \geq 0$,
$$ \tilde{f}^{-1} \left( \tilde{A}^{n+1}(L_{n+1})_C \right) \subset \tilde{A}^{n}(L_{n+1})_{C'} \subset \tilde{A}^{n}(L_n)_C$$
so that
$$ \tilde{f}^{-n-1} \left( \tilde{A}^{n+1}(L_{n+1})_C \right) \subset \tilde{f}^{-n} \left(\tilde{A}^{n}(L_n)_C \right).$$
We define
$$\begin{array}{rcl} \overline{\eta}^{s}((L_n)_{n \geq 0}) & = & \displaystyle \bigcap_{n \geq 0} \overline{\tilde{f}^{-n} \left( \tilde{A}^n(L_n)_C \right)} \\
 & = & \displaystyle \bigcap_{n \in \mathbb{Z}} \overline{\tilde{f}^{-n} \left( \tilde{A}^n(L_n)_C \right)},
 \end{array}$$

where we set $L_n=L_0$ for $n<0$
The subset $\overline{\eta}^{s} \left( (L_n)_{n \geq 0} \right)$ is the intersection of a nested sequence of non empty compact connected subsets of $\overline{S}$. Hence $\overline{\eta}^{s}(L)$ is a nonempty compact and connected subset of $\overline{S}$.

Fix two adapted Cauchy sequences $(L_n)_{n \geq 0}$ and $(L'_n)_{n \geq 0}$ of elements of $\mathcal{L}^s$ which are equivalent, and let us check that $\overline{\eta}^{s}((L_n)_{n})=\overline{\eta}^{s}((L'_n)_{n})$. First, observe that, when the sequence $(L'_k)_k=(L_{n_k})_k$ is a subsequence of $(L_n)$, then it is adapted. Moreover, we have, for any $k$,
$$\tilde{f}^{-k-1} \left( \tilde{A}^{k+1}(L_{n_{k+1}})_C \right)\subset \tilde{f}^{-k} \left( \tilde{A}^{k}(L_{n_{k+1}})_{C'}\right) \subset \tilde{f}^{-k} \left( \tilde{A}^{k}(L_{k})_{C}\right),$$
as $k \leq n_{k+1}$ and $d_u(A^{k}(L_k),A^{k}(L_{n_{k+1}})) \leq C-C'$. Hence, taking intersections,
$$\overline{\eta}^{s}((L_{n_k})_{k \geq 0}) \subset \overline{\eta}^{s}((L_{n})_{n \geq 0}).$$

For any integer $k$, we can choose an integer $n_k \geq k+1$ so that $d_u(L'_{n_k},L_{n_k}) \leq (\lambda^k-\lambda^{k+1})(C-C').$ so that $d_u(L'_{n_k},L_{k+1}) \leq \lambda^k (C-C')$. Also, the integers $n_k$ can be chosen in such a way that, for any $k$, $n_{k+1}>n_k$. Then, as $d_u(L_{n_k},L_k) \leq \lambda^{k+1}(C-C')$, 
$$ \tilde{f}^{-k-1} \left( \tilde{A}^{k+1}(L_{k+1})_C \right) \subset \tilde{f}^{-k} \left(\tilde{A}^{k}(L_{k+1})_{C'} \right)\subset \tilde{f}^{-k} \left(\tilde{A}^{k}(L'_{n_k})_C \right)$$
so that
$$\overline{\eta}^s \left( (L_n)_{n\geq 0} \right) \subset \overline{\eta}^{s}\left( (L'_{n_k})_k \right) \subset \overline{\eta}^s \left((L'_n)_{n\geq 0}\right).$$
Changing the roles of the sequences $(L_n)_n$ and $(L'_n)_n$, we prove the reverse inclusion. 

We just proved that two equivalent Cauchy sequences of elements of $\mathcal{L}^s$ define the same subset. Observe that, by taking subsequences, any Cauchy sequence of elements of $\mathcal{L}^s$ is equivalent to an adapted one. Hence, for any element $\xi \in \overline{\mathcal{L}}^s$, we define $\overline{\eta}^s(\xi)=\overline{\eta}^s((L_n)_{n \geq 0})$, where $(L_n)_{n \geq 0}$ is any adapted Cauchy sequence in the class $\xi$.

Moreover, for any $\xi \in \overline{\mathcal{L}}^{s}$ which is represented by the adapted Cauchy sequence $(L_n)_n$, and by setting $L_n=L_0$ if $n < 0$,
$$ \begin{array}{rcl}
\tilde{f}^{-1} \left( \overline{\eta}^{s}(\xi) \right) & = & \displaystyle \bigcap_{n \in \mathbb{Z}} \overline{\tilde{f}^{-n-1} \left( \tilde{A}^n(L_n)_C \right)} \\
 & = & \displaystyle \bigcap_{n \in \mathbb{Z}} \overline{\tilde{f}^{-n} \left( \tilde{A}^n(\tilde{A}^{-1}(L_{n-1}))_C \right)} \\
 & = &  \overline{\eta}^{s} \left( \tilde{A}^{-1}(\xi) \right),
\end{array}$$
as the sequence $(\tilde{A}^{-1}(L_{n-1}))_n$ is equivalent to the sequence $(\tilde{A}^{-1}(L_{n}))_n$. Hence point $3.$ of Theorem \ref{Th:generalized} holds for $\overline{\mathcal{L}}^{s}$. As $\tilde{f}_{|\partial \tilde{S}}=\tilde{A}_{|\partial \tilde{S}}$, point 5. of Theorem \ref{Th:generalized} holds.

Let us prove point $4.$ now. Let $\gamma \in \pi_1(S)$, $\xi \in \overline{\mathcal{L}}^{s}$ which is represented by an adapted Cauchy sequence $(L_n)_n$ of leaves and $n \geq 0$. Then there exists a deck transformation $\delta \in \pi_1(S)$ such that
$$\left\{ \begin{array}{rcl}
\tilde{A}^{n} \gamma & = & \delta \tilde{A}^n \\
\tilde{f}^{n} \gamma & = & \delta \tilde{f}^n.
\end{array}
\right.
$$
Therefore, as the pseudo-distance $d_u$ is invariant under $\pi_1(S)$,
$$\begin{array}{rcl}
\tilde{f}^{-n} \left((\tilde{A}^{n}(\gamma L_n))_C \right) & = & \tilde{f}^{-n} \left( \left(\delta\tilde{A}^{n}( L_n) \right)_C \right) \\
& = & \tilde{f}^{-n} \left( \delta(\tilde{A}^{n}( L_n))_C \right) \\
 & = & \gamma \tilde{f}^{-n} \left( (\tilde{A}^{n}( L_n))_C \right).
 \end{array}$$
Hence, by taking the intersection of the closures of these sets over $n \geq 0$, we obtain
$$\overline{\eta}^s \left(\gamma(\xi) \right)=\gamma \left(\overline{\eta}^{s}(\xi) \right).$$

Now, let us prove the first point. We start by showing that, for any $\xi \neq \xi' \in \overline{\mathcal{L}}^s$, the sets $\overline{\eta}^{s}(\xi)$ and $\overline{\eta}^{s}(\xi')$ are disjoint. Fix such $\xi \neq \xi'$. Let $(L_n)$ and $(L'_n)$ be adapted Cauchy sequences which are respectively associated to $\xi$ and $\xi'$. For any $n \geq 0$
$$d_u(L_n,L'_n)=d_{u} \left( \tilde{A}^{-n}(\tilde{A}^{n}(L_n)),\tilde{A}^{-n}(\tilde{A}^{n}(L_n)) \right) = \lambda^{n} d_u \left(\tilde{A}^{n}(L_n),\tilde{A}^{n}(L'_n)\right)$$
so that, as the sequence $(d_u(L_n,L'_n))_n$ converges to a positive real number,
$$ \lim_{n \rightarrow +\infty}d_u \left(\tilde{A}^{n}(L_n),\tilde{A}^{n}(L'_n)\right)=+\infty.$$
Hence there exists $N \geq 0$ such that
$$\tilde{A}^{N}(L_N)_C \cap \tilde{A}^{N}(L'_N)_C= \emptyset.$$
As distinct leaves of $\tilde{\mathcal{F}}^s$ have distinct endpoints on $\partial \tilde{S}$, we deduce that
$$\overline{\tilde{A}^{N}(L_N)_C} \cap \overline{\tilde{A}^{N}(L'_N)_C}= \emptyset.$$
Hence
$$\overline{\eta}^{s}(\xi) \cap \overline{\eta}^{s}(\xi') \subset \overline{\tilde{f}^{-N} \left(\tilde{A}^{N}(L_N)_C \cap \tilde{A}^{N}(L'_N)_C \right)} =\emptyset.$$

Recall that, for any $L \in \mathcal{L}^s$, $\overline{\eta}^{s}(L)\cap \partial \tilde{S}=L \cap \partial \tilde{S}$ consists of a finite number of points, and more than one. Moreover, the set $\overline{\eta}^{s}(L)$ is connected so that $\eta^{s}(L) \neq \emptyset$.

Now, let us prove that, for any point $\tilde{x} \in \tilde{S}$, there exists a point $\xi$ of $\overline{\mathcal{L}}^s$ such that $\tilde{x} \in \eta^{s}(\xi)$, which will prove the first point. 

Fix $n \geq 0$ and $\tilde{x} \in \tilde{S}$. Let us consider the set $E^{n}_{\tilde{x}}$ which is the closure in $\overline{\mathcal{L}}^s$ of the set of the leaves $L$ of $\tilde{\mathcal{F}}^{s}$ such that
$$\tilde{f}^{n}(\tilde{x}) \in  \tilde{A}^n(L)_C.$$
Observe that $E^{n+1}_{\tilde{x}} \subset E^{n}_{\tilde{x}}$. Indeed, if $d_u \left(\tilde{A}^{n+1}(L),\tilde{f}^{n+1}(\tilde{x}) \right) \leq C$ for some stable leaf $L$, then
$$\begin{array}{rcl}
d_u \left(\tilde{A}^{n}(L),\tilde{f}^{n}(\tilde{x}) \right) & = & d_u \left(\tilde{A}^{-1}\tilde{A}^{n+1}(L),\tilde{A}^{-1}\tilde{A}\tilde{f}^{n}(\tilde{x}) \right) \\
 & = & \lambda d_u \left(\tilde{A}^{n+1}(L),\tilde{A}\tilde{f}^{n}(\tilde{x}) \right) \\
 & \leq & \lambda \left( d_u \left(\tilde{A}^{n+1}(L),\tilde{f}^{n+1}(\tilde{x}) \right) +C_0 \right) \\
 & \leq & \lambda(C+C_0) \leq C
\end{array}
$$  
as $C$ was chosen so that $C\geq \frac{C_0}{1-\lambda} \geq \frac{\lambda C_0}{1-\lambda}$. We obtain then the wanted inclusion by taking closures in $\overline{\mathcal{L}}^s$.

Let us prove that the diameter of the set $E^n_{\tilde{x}}$ tends to $0$ as $n$ tends to $+\infty$. Indeed, for any two stable leaves $L$ and $L'$ in $E^n_{\tilde{x}}$,
$$d_u(A^{n}(L),A^{n}(L')) \leq 2C$$
so that
$$d_u(L,L') \leq 2C \lambda^{n}.$$
The diameter of the set $E^n_{\tilde{x}}$ is smaller than $2C \lambda^{n}$ and tends to $0$.

Therefore, the set $\displaystyle \bigcap_{n \geq 0} E^{n}_{\tilde{x}}$ is a decreasing sequence of nonempty closed subsets of $\overline{\mathcal{L}}^s$ whose diameters tend to $0$. As the latter set is complete, this intersection consists of a single point $\xi$. Any sequence $(L_n)$ of stable leaves which satisfies that, for any $n$, $L_n$ belongs to $E^{n}_{\tilde{x}}$, is a Cauchy sequence which represents $\xi$. Fix such a sequence $(L_n)$. By definition of the sets $E^n_{\tilde{x}}$, 
$$ \tilde{x} \in \bigcap_{n \geq 0} \tilde{f}^{-n}\left(\tilde{A}^n(L_n)_C \right).$$
Take a subsequence $(L_{n_k})_{k \geq 0}$ of $(L_n)_n$ which is adapted. By Lemma \ref{Lem:bigenoughnbhd}, for any $k\geq 0$, as $n_k \geq k$
$$\tilde{f}^{-n_k}\left(\tilde{A}^{n_k}(L_{n_k})_C\right) \subset \tilde{f}^{-k}\left(\tilde{A}^{k}(L_{n_k})_C\right).$$
Hence
$$ \tilde{x} \in \bigcap_{k \geq 0} \tilde{f}^{-k}\left(\tilde{A}^k(L_{n_k})_C \right)=\eta^s(\xi).$$

The first point of Theorem \ref{Th:generalized} is proved.

Let us prove the second point now. If $E$ is a closed and connected subset of $\tilde{S}$ which is a union of leaves in $\mathcal{L}^s$, denote by $\overline{\mathcal{L}}^s_E$ the subset of $\overline{\mathcal{L}}^s$ consisting of points which are represented by Cauchy sequences of leaves which are contained in $E$. Let
$$\overline{\eta}^{s}(E)=\bigcap_{n \geq 0} \overline{\tilde{f}^{-n}\left(  \tilde{A}^n(E)_{C} \right)}.$$
The subset $\overline{\eta}^{s}(E)$ is compact and connected as a decreasing intersection of compact and connected subsets of $\overline{S}$.

Observe that
$$\overline{\eta}^{s}(E)\cap \tilde{S}=\bigcup_{\xi \in \overline{\mathcal{L}}^s_E} \eta^{s}(\xi).$$
Indeed, suppose that the point $\tilde{x}$ belongs to $\overline{\eta}^{s}(E)\cap \tilde{S}$. Then, by definition, for any $n \geq 0$, there exists a stable leaf $L_n \subset E$ such that $\tilde{f}^{n}(\tilde{x}) \in \tilde{A}^n(L_n)_C$. As we saw in the proof of the first point, the sequence $(L_n)_{n \geq 0}$ has to be a Cauchy sequence and $\tilde{x} \in \eta^s(\xi)$, where $\xi$ is the class of the sequence $(L_n)_n$. The reverse inclusion is clear.
Hence
$$\overline{\eta}^{s}(E)= \overline{\bigcup_{\xi \in \overline{\mathcal{L}}^s_E} \eta^{s}(\xi)}.$$

Suppose now that $E$ is an open and connected subset of $\tilde{S}$ which is a union of leaves of $\tilde{\mathcal{F}}^s$. We write $E=\cup_{n\geq 0} E_n$, where $(E_n)$ is defined as follows. Fix a leaf $L_E$ which is contained in $E$ and define $E_n$ as the connected component of $L_E$ in the closure of the set of points $\tilde{x} \in \tilde{S}$ such that $d_u(\tilde{x}, E^{c}) \geq \min \left(\frac{1}{n},d_u(L_E, E^{c})/2 \right)$. The set
$$\overline{\eta}^{s}(E)=\bigcup_{n \geq 0} \overline{\eta}^{s}(E_n)$$
is connected as an increasing sequence of connected subsets. The definition of $\overline{\eta}^{s}(E)$ does not depend on the chosen leaf $L_E$.

Fix a leaf $L_0$ of $\tilde{\mathcal{F}}^s$. Let $\kappa$ be the finite set consisting of connected components of $\tilde{S} \setminus L_0$. Observe that
$$\tilde{S} \setminus L_0= \bigcup_{E \in \kappa} E.$$
Then, as the subsets $\overline{\eta}^{s}(\xi)$, for $\xi \in \overline{\mathcal{L}}^s$, cover $\tilde{S}$ and are pairwise disjoint,
$$ \tilde{S} \subset A= \overline{\eta}^{s}(L_0) \cup  \bigcup_{E \in \kappa} \overline{\eta}^{s}(E) \subset \overline{S}$$
and the sets appearing in this decomposition are pairwise disjoint. Indeed, as the sets of the form $\eta^{s}(\xi)$ are disjoint for different $\xi$ the intersection of two of those sets with $\tilde{S}$ is empty. Moreover, 
$$\overline{\eta}^{s}(L_0) \cap \partial \tilde{S}=\overline{L}_0 \cap \partial \tilde{S}$$
and, for any $E \in \kappa$ written as union of the sequence $(E_n)$ of closed subsets which are union of leaves,
$$\overline{\eta}^{s}(E) \cap \partial \tilde{S}= \bigcup_{n \geq 0} \overline{E_n} \cap \partial \tilde{S}$$
have to be pairwise disjoint (otherwise $L_0$ would have to belong to one of those sets $E_n$, a contradiction). Morover, as ends of leaves of $\tilde{\mathcal{F}}^s$ are dense in $\partial \tilde{S}$, the set $\bigcup_{n \geq 0} \overline{E_n} \cap \partial \tilde{S}$ is a connected component of $\partial \tilde{S} \setminus \overline{L}_0$ so that
$$\overline{\eta}^{s}(L_0) \cup  \bigcup_{E \in \kappa} \overline{\eta}^{s}(E)=\overline{S}.$$

For any $E \in \kappa$, let $c(E)$ be the closure of $E$ in $\tilde{S}$. Observe that $c(E)=E \cup L_0$. Let us prove also that
$$\overline{\eta}^{s}(c(E))=\overline{\eta}^s(E) \cup \overline{\eta}^s(L_0).$$ 
The reverse inclusion is clear. Fix a decomposition $E= \cup_n E_n$ as above. For the direct inclusion, first, 
$$\overline{\eta}^{s}(c(E))\cap \partial \tilde{S}=\overline{c(E)}\cap \partial \tilde{S}=\overline{E} \cap \partial \tilde{S}=(\overline{\eta}^s(E) \cup \overline{\eta}^s(L_0)) \cap \partial \tilde{S}.$$
Now, any point in $\overline{\eta}^{s}(c(E)) \cap \tilde{S}$ belongs to some $\eta^{s}(\xi)$, for some $\xi \in \mathcal{L}^s_{c(E)}$. Take a Cauchy sequence $(L'_n)$ of leaves which represents $\xi$. Then either the sequence $(d_u(L'_n,L_0))_n$ has to be eventually bounded below and hence $(L'_n)$ is eventually contained in some $E_k$ or the sequence $(L'_n)$ is equivalent to the constant sequence $(L_0)$. In this second case, $\eta^{s}(\xi)=\eta^{s}(L_0)$. This proves the direct inclusion.
 
 Then, for any $E \in \kappa$, the set
$$\overline{S} \setminus \overline{\eta}^{s}(E)= \bigcup_{E'\in \kappa, \ E' \neq E} \overline{\eta}^{s}(c(E'))$$
is closed as a finite union of closed subsets, so that each $\overline{\eta}^{s}(E)$, for $E \in \kappa$, is open in $\overline{S}$. Hence each $\overline{\eta}^{s}(E)$, for $E \in \kappa$, has to be a connected component of $\overline{S} \setminus \overline{\eta}^{s}(L_0)$. 

Now, let $\xi \in \overline{\mathcal{L}}^s \setminus \mathcal{L}^s$. Fix a leaf $L_0 \in \mathcal{L}^s$. In this case, we define, for any $n \geq 1$, $E_n$ as the connected component of $L_0$ in the set of leaves $L$ of $\mathcal{L}^s$ such that 
$$d_u(L,\xi) \geq \min \left(\frac{d_u(L_0,\xi)}{2} ,\frac{1}{n} \right).$$
 Then the union $A$ of the $\overline{\eta}^s(E_n)$, for $n\geq 1$, is connected as an increasing union of connected subsets. Moreover, $$A \cup \overline{\eta}^{s}(\xi)= \overline{S}$$ and the subsets $A$ and $\overline{\eta}^{s}(\xi)$ are disjoint. Hence the complement of the subset $\overline{\eta}^{s}(\xi)$, which is $A$, is connected.

We move to the proof of point 6. of Theorem \ref{Th:generalized}.

For this point, we need the following claim, where the constant $C_0$ is given by Claim \ref{uniformbound}.

\begin{claim}
For any two points $\tilde{x}$ and $\tilde{y}$ and any $n\geq 0$,
$$d_s(\tilde{f}^{n}(\tilde{x}), \tilde{f}^n(\tilde{y})) \leq \frac{2C_0}{1- \lambda}+d_s(\tilde{x},\tilde{y}).$$
\end{claim}

\begin{proof}
Fix $n \geq 0$.
$$\begin{array}{rcl}
d_s(\tilde{f}^{n+1}(\tilde{x}),\tilde{f}^{n+1}(\tilde{y})) & \leq & d_s(\tilde{f}^{n+1}(\tilde{x}),\tilde{A}\tilde{f}^{n}(\tilde{x}))+d_s(\tilde{A}\tilde{f}^{n}(\tilde{x}),\tilde{A}\tilde{f}^{n}(\tilde{y}))+d_s(\tilde{A}\tilde{f}^{n}(\tilde{y}),\tilde{f}^{n+1}(\tilde{y})) \\
 & \leq & C_0+ \lambda d_s(\tilde{f}^{n}(\tilde{x}),\tilde{f}^{n}(\tilde{y}))+C_0.
\end{array}$$
Then, an induction implies that, for any $n \geq 0$,
$$d_s(\tilde{f}^{n}(\tilde{x}),\tilde{f}^{n}(\tilde{y})) \leq 2C_0 \left( \sum_{i=0}^{n-1} \lambda^i \right) + \lambda^n d_s(\tilde{x},\tilde{y}).$$
As 
$$\sum_{i=0}^{n-1} \lambda^i \leq \frac{1}{1-\lambda}=\sum_{i=0}^{+\infty} \lambda^i,$$
this implies the claim.   
\end{proof}

Fix a point $\xi \in \overline{\mathcal{L}}^s$ which is represented by an adapted Cauchy sequence $(L_n)$ of leaves. By construction of $\eta^{s}(\xi)$, for any $n \geq 0$ and any $\tilde{y} \in \eta^{s}(\xi)$,
$$d_u(\tilde{f}^{n}(\tilde{y}),\tilde{A}^{n}(L_n)) \leq C,$$
where $C>0$ is given by Lemma \ref{Lem:bigenoughnbhd}. Hence, for any pair of points $\tilde{x}$ and $\tilde{y}$ in $\eta^s(\xi)$,
$$d_{u}(\tilde{f}^{n}(\tilde{x}),\tilde{f}^{n}(\tilde{y})) \leq 2C$$
and
$$\begin{array}{rcl}
d(\tilde{f}^{n}(\tilde{x}),\tilde{f}^{n}(\tilde{y})) & \leq & 2C+d_s(\tilde{f}^{n}(\tilde{x}),\tilde{f}^{n}(\tilde{y})) \\
 & \leq & 2C+ \frac{2C_0}{1- \lambda}+d_s(\tilde{x},\tilde{y})
 \end{array}$$
by the above claim. This proves the direct inclusion. 

To prove the reverse inclusion, we will actually prove that the set $\tilde{S} \setminus \eta^{s}(\xi)$ is contained in the complement of the right hand set in the statement.
Let $\tilde{y} \in \tilde{S} \setminus \eta^{s}(\xi)$ and take $\xi' \in \mathcal{L}^s$ such that $\tilde{y} \in \eta^{s}(\xi')$. Let $(L'_n)$ be an adapted Cauchy sequence of leaves which represents $\xi'$. Then, for any point $\tilde{x} \in \eta^s(\xi)$, 
$$\begin{array}{rcl}
d_u(\tilde{f}^{n}(\tilde{x}),\tilde{f}^n(\tilde{y})) & \geq & d_{u}(\tilde{A}^{n}(L_n),\tilde{A}^{n} (L'_n))-d_u(\tilde{A}^{n}(L_n),\tilde{f}^{n}(\tilde{x}))-d_u(\tilde{A}^{n}(L'_n),\tilde{f}^{n}(\tilde{y})) \\
 & \geq & \lambda^{-n}d_u(L_n,L'_n)-2C \rightarrow +\infty.
 \end{array}$$
 which proves the wanted inclusion.

Now, let us prove point 7. Fix a point $\xi \in \overline{\mathcal{L}^s}$.

%We first prove the following.

%\begin{claim}For every $\varepsilon >0$, every point $\xi$ of $\overline{\mathcal{L}^i}$ such that $\eta^{i}(\xi) \neq \emptyset$ and every compact subset $K$ of $\tilde{S}$, there exists $\eta>0$ such that, for every point $\xi'$ of $\mathcal{L}^i$ such that $d_u(\xi,\xi') \leq \eta$, the subset $\eta^i(\xi') \cap K$ is contained in a $\varepsilon$ neighbourhood of $\eta^i(\xi) \cap K$ for the distance $d$.
%\end{claim}

%\begin{proof}
We will use here point 6. which implies that the definition of $\overline{\eta}^s(\xi)$ does not depend on the chosen $C>0$. Fix $C>\frac{C_0}{1-\lambda}$, a neighbourhood $U$ of $\overline{\eta}^s(\xi)$ in $\overline{S}$ and an adapted Cauchy sequence $(L_n)$ of stable leaves which represents $\xi$.

Recall that
$$\overline{\eta}^{s}(\xi)=\bigcap_{n \geq 0} \tilde{f}^{-n}\left(  \overline{\tilde{A}^n(L_n)_{C}} \right).$$

Then, as the subset $\overline{\eta}^{s}(\xi)$ is the intersection of a nested sequence of compact subsets of $\overline{S}$, there exists $N>0$ such that

$$\tilde{f}^{-N}\left( \overline{ \tilde{A}^N(L_N)_{C}} \right)=\bigcap_{n = 0}^N \tilde{f}^{-n}\left( \overline{ \tilde{A}^n(L_n)_{C}} \right) \subset U.$$

Observe that, by Lemma \ref{Lem:bigenoughnbhd} and the definition of an adapted sequence, there exists $C'>\frac{C_0}{1-\lambda}$ such that, for any $n > N$,

$$\overline{\eta}^s(\xi) \subset \tilde{f}^{-n}\left( \overline{ \tilde{A}^n(L_n)_{C}} \right) \subset \tilde{f}^{-N}\left( \overline{ \tilde{A}^N(L_n)_{C'}} \right) \subset \tilde{f}^{-N}\left( \overline{ \tilde{A}^N(L_N)_{C}} \right) \subset U.$$ 

Now, fix a constant $D$ with $\frac{C_0}{1-\lambda}<D<C'$. Applying Lemma \ref{Lem:bigenoughnbhd} with this constant $D$ gives a new constant $D>D'>\frac{C_0}{1-\lambda}$ such that the lemma holds. Take a point $\xi' \in \overline{\mathcal{L}}^s$ such that $d_u(\xi',\xi) < \lambda^N( C'-D')$. Take an adapted Cauchy sequence $(L'_n)$ of stable leaves for the constants $D$ and $D'$ which represents $\xi'$. Then, for any large enough $n$, $d_u(L_n,L'_n) < \lambda^N(C'-D')$. Fix such an integer $n > N$. Then $d_u(\tilde{A}^N(L_n), \tilde{A}^N(L'_n))<C'-D'$ so that
$$\tilde{A}^N(L'_n)_{D'} \subset \tilde{A}^N(L_n)_{C'}$$
and
$$\overline{\eta}^s(\xi') \subset \tilde{f}^{-N} \left( \overline{ \tilde{A}^N(L'_n)_{D'}} \right) \subset \tilde{f}^{-N}\left( \overline{ \tilde{A}^N(L_n)_{C'}} \right) \subset U.$$

This proves point 7. and completes the proof of Theorem \ref{Th:generalized}.
\end{proof}

\section{Extension to the case of surfaces with marked points}

Let $S$ be a closed surface and $F$ be a finite subset of $S$. In this section, we fix a homeomorphism $A$ of $S$ which is pseudo-Anosov relative to $F$. The homeomorphism $A$ preserves two transverse singular foliations $\mathcal{F}^s$ and $\mathcal{F}^u$. The only difference with the case of an actual pseudo-Anosov homeomorphism of $S$ is that these foliations can have one-pronged singularities. Let us denote by $\tilde{\mathcal{F}}^s$ and $\tilde{\mathcal{F}}^u$ the respective lifts of $\mathcal{F}^s$ and $\mathcal{F}^u$ to the universal cover $\tilde{S}$ of $S$. As before, we denote by $\overline{\mathcal{L}}^s$ (respectively $\overline{\mathcal{L}}^u$) the completion of the space of stable (resp. unstable) leaves of $\tilde{A}$.

Let $f$ be a homeomorphism which is isotopic to $A$. Take an isotopy $(f_t)$ with $f_1=f$ and $f_0=A$ which pointwise fixes $F$ and lift it to an isotopy $(\tilde{f}_t)_{t \in [0,1]}$ of $\tilde{S}$ with $\tilde{f}_0=\tilde{A}$. Then $\tilde{f}=\tilde{f}_1:\tilde{S} \rightarrow \tilde{S}$ is a lift of $f$.

The following theorem is a generalization of Theorem \ref{Th:generalized} to this setting. However, in this statement, we do not mention any behaviour of the generalized leaves with respect to the boundary at infinity of $\tilde{S}$, as, in this case, we do not control a priori the behaviour at infinity of the leaves of $\tilde{\mathcal{F}}^s$ and $\tilde{\mathcal{F}}^u$. However, all the other results are still true.

\begin{theorem} \label{Th:generalizedfurther}
Fix $i=s,u$. To each point $\xi$ of $\overline{\mathcal{L}}^i$, we associate a nonempty connected subset $\eta^i(\xi)$ of $\tilde{S}$ with the following properties.
\begin{enumerate}
\item The $\eta^{i}(\xi)$, for $\xi \in \overline{\mathcal{L}}^i$, form a partition of $\tilde{S}$.
\item For every $L \in \mathcal{L}^i$, the set $\tilde{S} \setminus \eta^{i}(L)$ has the same number of connected components as $\tilde{S} \setminus L$.
\item For every $\xi \in \overline{\mathcal{L}}^i$,
$$ \tilde{f} \left( \eta^i(\xi) \right)=\eta^i \left(A(\xi) \right).$$
\item For every $\xi \in \overline{\mathcal{L}}^i$ and any deck transformation $\gamma \in \pi_1(S)$,
$$ \gamma \left( \eta^i(\xi) \right)=\eta^i \left( \gamma(\xi) \right).$$
\item The sets $\eta^s(\xi)$ are the stable sets of all its points, namely for every $\xi \in \overline{\mathcal{L}}^s$ and $\tilde{x} \in \eta^{s}(\xi)$,
$$ \eta^{s}(\xi)= \left\{ \tilde{y} \in \tilde{S} \ | \ \exists C'>0, \ \forall n \geq 0, d(\tilde{f}^{n}(\tilde{x}),\tilde{f}^{n}(\tilde{y})) \leq C' \right\}.$$
The sets $\eta^u(\xi)$ are the unstable sets of all its points, namely for every $\xi \in \overline{\mathcal{L}}^u$ and $\tilde{x} \in \eta^{u}(L)$,
$$ \eta^{u}(L)= \left\{ \tilde{y} \in \tilde{S} \ | \ \exists C'>0, \ \forall n \geq 0, d(\tilde{f}^{-n}(\tilde{x}),\tilde{f}^{-n}(\tilde{y})) \leq C' \right\}.$$
\end{enumerate}
\end{theorem}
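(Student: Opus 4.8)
The plan is to mimic the proof of Theorem \ref{Th:generalized} essentially verbatim, since the only structural difference is the presence of one-pronged singularities in $\tilde{\mathcal{F}}^s$ and $\tilde{\mathcal{F}}^u$, and crucially \emph{none} of the steps in the proof of Theorem \ref{Th:generalized} except the discussion at the boundary at infinity made any use of the absence of one-pronged singularities. Concretely, I would again treat the case $i=u$, working with the pseudo-distance $d_u$ and the stable foliation $\tilde{\mathcal{F}}^s$ (recalling that $\eta^u(L)$ for $L \in \tilde{\mathcal{F}}^u$ is obtained by the symmetric construction). The starting Claim ($d(\tilde f(\tilde x),\tilde A(\tilde x)) \leq C_0$ and likewise for inverses, uniformly in $\tilde x$) goes through with no change: its proof only used compactness of a fundamental domain, $\pi_1(S)$-invariance of $d$, and the cocycle relation $\tilde f \circ \gamma = \xi(\gamma)\circ\tilde f = $ (same $\xi(\gamma)$ as for $\tilde A$), all of which still hold. (Here $F$ is pointwise fixed by the isotopy, so $\tilde f$ and $\tilde A$ are legitimate lifts and agree on the relevant algebraic data.) Then I would fix $C > \frac{C_0}{1-\lambda}$ and, exactly as in Lemma \ref{Lem:bigenoughnbhd}, prove $\tilde f^{-1}(\tilde A(L)_C) \subset L_C$ for every $L \in \tilde{\mathcal{F}}^s$, hence the nested inclusions $\tilde f^{-n-1}(\tilde A^{n+1}(L)_C) \subset \tilde f^{-n}(\tilde A^n(L)_C)$.

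I would then \emph{define}, for $L \in \tilde{\mathcal{F}}^s$,
$$\eta^s(L) = \bigcap_{n \geq 0} \tilde f^{-n}\bigl(\tilde A^n(L)_C\bigr),$$
working entirely inside $\tilde S$ (with no closures taken in any compactification). Here I must be slightly careful: in the closed-surface case nonemptiness and connectedness of $\eta^s(L)$ came from taking closures in the compact space $\overline S$ and invoking the finite-intersection property; without a useful compactification I instead argue as in the proof of point 1 in Theorem \ref{Th:generalized} via the auxiliary sets $E^n_{\tilde x} = \overline{\{\tilde y : d_u(\tilde A^n(\tilde y),\tilde f^n(\tilde x)) \leq C\}}$, which form a \emph{decreasing} sequence of closed connected subsets of $\tilde S$, each a union of leaves of $\tilde{\mathcal{F}}^s$. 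For a fixed $\tilde x$, $\bigcap_n E^n_{\tilde x}$ is nonempty — this is the one place I would want a short compactness/boundedness input: one checks that the $E^n_{\tilde x}$ all meet a fixed compact set (they all contain a leaf passing near the $\tilde A^{-n}$-pullback of $\tilde f^n(\tilde x)$, which stays in a bounded region by the Claim plus the $\lambda$-contraction), so the nested intersection of nonempty closed sets in a bounded-hence-compact region is nonempty; and it is connected as a nested intersection of compact connected sets. Picking any leaf $L \subset \bigcap_n E^n_{\tilde x}$ gives $\tilde x \in \eta^s(L)$, proving the covering half of point 1. Disjointness (the other half of point 1) is identical to the closed case: $d_u(\tilde A^n(L),\tilde A^n(L')) \to \infty$ as $n\to\infty$ for $L \neq L'$, so for large $N$ already $\tilde A^N(L)_C \cap \tilde A^N(L')_C = \emptyset$ inside $\tilde S$, whence $\eta^s(L) \cap \eta^s(L') = \emptyset$; no endpoint-distinctness at infinity is needed since we never passed to $\overline S$. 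Points 3 and 4 ($\tilde f$-equivariance and $\pi_1(S)$-equivariance) are the same algebraic manipulations of the defining intersection as before, now without the overline. Point 5 (stable/unstable set characterization) and point 6 (upper semicontinuity in $K$) are copied line for line from the corresponding arguments for Theorem \ref{Th:generalized}, since those arguments only manipulate $d_u$, $d_s$, $\lambda$, and $C_0$ and take place inside $\tilde S$ on a compact $K$; in particular point 5 again yields uniqueness, and the remark that $\eta^s(L)$ is independent of $C$.

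For point 2 (counting connected components of complements), I would reproduce the closed-surface scheme but at the level of $\tilde S$ rather than $\overline S$: for a closed connected union of leaves $E$, set $\eta^s(E) = \bigcap_{n\geq 0}\tilde f^{-n}(\tilde A^n(E)_C) = \bigcup_{L \subset E}\eta^s(L)$ (the set equality because both sides are unions of leaves agreeing on $\tilde S$ — which is now \emph{all} the relevant space, so this is even cleaner than before), and for an open connected union of leaves $E$ write $E = \bigcup_n E_n$ as an increasing union of closed connected unions of leaves exactly as in the excerpt, getting $\eta^s(E)$ connected. Then fix a leaf $L_0$, let $\kappa$ be the (finite) set of connected components of $\tilde S \setminus L_0$; these are open connected unions of leaves, $\eta^s(L_0) \sqcup \bigsqcup_{E\in\kappa}\eta^s(E) = \tilde S$, and $\tilde S \setminus \eta^s(E) = \bigcup_{E' \neq E}\eta^s(E' \cup L_0)$ is closed (finite union of closed sets, each a $\eta^s$ of a closed union of leaves), so each $\eta^s(E)$ is open in $\tilde S$, hence a full connected component of $\tilde S \setminus \eta^s(L_0)$; this gives the bijection $\kappa \leftrightarrow \{\text{components of }\tilde S\setminus\eta^s(L_0)\}$ and hence equality of cardinalities. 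One-pronged singularities only affect the combinatorics of $\kappa$ (a leaf through a one-pronged singularity has a complement with the "expected" number of components, and that number is what point 2 compares against), so nothing breaks. The main obstacle I anticipate is precisely the nonemptiness/compactness argument for $\bigcap_n E^n_{\tilde x}$ and more generally making sure every appeal to compactness in the original proof is replaced by a genuinely valid statement about bounded subsets of $(\tilde S, d)$ — i.e. that the sets $\tilde A^n(L)_C$, while non-compact, intersect any fixed $d$-ball in a compact set, and that the relevant nested families stay in a bounded region; once that bookkeeping is done, the rest is a transcription.
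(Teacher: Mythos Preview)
Your approach is genuinely different from the paper's. The paper does \emph{not} rerun the proof of Theorem~\ref{Th:generalized}; instead it passes to the orientation double cover $p:\hat S\to S$ branched over the odd-pronged singularities. On $\hat S$ the lifted map $\hat A$ is an honest pseudo-Anosov (all prongs become even), so Theorem~\ref{Th:generalized} applies verbatim to $\hat f$ and $\hat A$; the resulting partitions are invariant under the nontrivial deck transformation of $p$ (by the explicit $\bigcap_n \tilde f^{-n}(\tilde A^n(L)_C)$ formula), hence push down to $S$ and lift to $\tilde S$. This buys the paper all the compactness it needs for free, since on the cover the boundary-at-infinity machinery is available.

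Your direct approach is natural and most of it does go through, but there is a real gap exactly where you suspected, and it is worse than just nonemptiness. You assert that $\bigcap_n E^n_{\tilde x}$ is ``connected as a nested intersection of compact connected sets'', and implicitly you need the same for $\eta^s(L)=\bigcap_n \tilde f^{-n}(\tilde A^n(L)_C)$ and for $\eta^s(E)$ with $E$ a closed union of leaves (this feeds into your argument for point~2). But these sets are \emph{not} compact: each $E^n_{\tilde x}$ and each $\tilde A^n(L)_C$ is a union of unbounded stable leaves. A nested intersection of merely closed connected sets in a non-compact space need not be connected (e.g.\ $F_n=(\{0\}\times\mathbb R)\cup(\{1\}\times\mathbb R)\cup([0,1]\times[n,\infty))$ in $\mathbb R^2$). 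Restricting to a fixed compact $K$ to recover compactness does not help either, since $E^n_{\tilde x}\cap K$ will typically be disconnected. So the connectedness of $\eta^s(L)$, which is part of the statement and is also used in your proof of point~2, is not established by your outline. (Nonemptiness is a milder issue: one can show the leaves through $\tilde A^{-n}\tilde f^{n}(\tilde x)$ form a $d_u$-Cauchy sequence in the leaf space and take the limit leaf, but you should say this rather than invoke compactness.)

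Note also that in the relative setting $S$ may be a sphere or a torus, so there is no hyperbolic circle at infinity to fall back on; this is precisely why the paper abandons the direct route and passes to the branched cover $\hat S$, which always has genus $\ge 2$.
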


We will prove this theorem by using Theorem \ref{Th:generalized} and a finite branched covering map. This covering trick was indicated to me by Sebastian Hensel. 

\begin{remark}
By construction, the subsets $\eta^u(\xi) \cap \eta^s(\xi')$ have a diameter which is uniformly bounded, which makes the connection with the recent preprint by Alejo García-Sassi and Fábio Armando Tal \cite{GT}.
\end{remark}

\begin{proof}
Denote by $\sigma\subset F$ the subset consisting of one-pronged singular points of the foliation $\mathcal{F}^s$ (which is also the set of one-pronged singular points of $\mathcal{F}^u$) and fix a point $p \in S \setminus F$. For each such singularity $s \in \sigma$ choose a simple loop based at $p$ which bounds a once punctured disk of $S \setminus F$, with puncture at $s$. Let $E$ be the finite subset of $\pi_1(S\setminus F,p)$ consisting of such loops. As any free group is residually finite, there exists a finite index normal subgroup of $\pi_1(S\setminus F,p)$ which does not contain any point of $E$. This subgroup in turn defines a finite cover of $S \setminus F$ and a branched cover $b:\hat{S}\rightarrow S$.

The homeomorphisms $f$ and $A$ lift to isotopic homeomorphisms $\hat{f}$ and $\hat{A}$ of $\hat{S}$. Observe that $\hat{A}$ is now a pseudo-Anosov homeomorphism relative to $b^{-1}(F)$ with no one-pronged singularity points at points of $p^{-1}(F)$ so that $\hat{A}$ is a pseudo-Anosov homeomorphism of $\hat{S}$. Theorem \ref{Th:generalized} yields a stable and an unstable partition of $\hat{S}$ which correspond to $\hat{f}$. By construction of this partition (see the proof of Theorem \ref{Th:generalized}), this partition is invariant under the deck transformations of $b$. Hence this partition projects to a stable and an unstable partition of $S$ for $f$ which in turn lift to $\tilde{S}$ to give the wanted partitions.
\end{proof}

\small

\bibliographystyle{amsalpha}
\bibliography{Biblio}

\providecommand{\bysame}{\leavevmode\hbox to3em{\hrulefill}\thinspace}
\providecommand{\MR}{\relax\ifhmode\unskip\space\fi MR }
% \MRhref is called by the amsart/book/proc definition of \MR.
\providecommand{\MRhref}[2]{%
  \href{http://www.ams.org/mathscinet-getitem?mr=#1}{#2}
}
\providecommand{\href}[2]{#2}
\begin{thebibliography}{MR079}

\bibitem[Fat90]{MR1062759}
Albert Fathi, \emph{Homotopical stability of pseudo-{A}nosov diffeomorphisms},
  Ergodic Theory Dynam. Systems \textbf{10} (1990), no.~2, 287--294.
  \MR{1062759}

\bibitem[FM12]{MR2850125}
Benson Farb and Dan Margalit, \emph{A primer on mapping class groups},
  Princeton Mathematical Series, vol.~49, Princeton University Press,
  Princeton, NJ, 2012. \MR{2850125}

\bibitem[GST24]{GT}
Alejo García-Sassi and Fábio~Armando Tal, \emph{Fully chaotic conservative
  models for some torus homeomorphisms}, 2024.

\bibitem[Han85]{MR0805836}
Michael Handel, \emph{Global shadowing of pseudo-{A}nosov homeomorphisms},
  Ergodic Theory Dynam. Systems \textbf{5} (1985), no.~3, 373--377. \MR{805836}

\bibitem[MR079]{MR0568308}
\emph{Travaux de {T}hurston sur les surfaces}, Ast\'erisque, vol. 66-67,
  Soci\'et\'e{} Math\'ematique de France, Paris, 1979, S\'eminaire Orsay, With
  an English summary. \MR{568308}

\end{thebibliography}

\end{document}